\newtheorem{lemma}{Lemma}
\newtheorem{corollary}[lemma]{Corollary}
\newtheorem{proposition}[lemma]{Proposition}
\newtheorem{theorem}[lemma]{Theorem}
\newtheorem{question}[lemma]{Question}
\theoremstyle{definition}
\newtheorem{definition}[lemma]{Definition}
\newtheorem{remark}[lemma]{Remark}
\numberwithin{lemma}{section}
\title[]{The first elements of the quotient of a numerical semigroup by a positive integer}
\author{Alessio Moscariello}
\subjclass[2010]{20M14.}
\keywords{Numerical semigroups; small elements, Frobenius number.}
\address[Alessio Moscariello]{Dipartimento di Matematica e Informatica, \ Universit\`a di Catania, \  Viale Andrea Doria 6, 
95125 Catania,\ Italy}
\email{alessio.moscariello@studium.unict.it}
\begin{document}

\begin{abstract}
Given three pairwise coprime positive integers $a_1,a_2,a_3 \in \mathbb{Z}^+$ we show the existence of a relation between the sets of the first elements of the three quotients $\frac{\langle a_i,a_j \rangle}{a_k}$ that can be made for every $\{i.j,k\}=\{1,2,3\}$. Then we use this result to give an improved version of Johnson's semi-explicit formula for the Frobenius number $g(a_1,a_2,a_3)$ without restriction on the choice of $a_1,a_2,a_3$ and to give an explicit formula for a particular class of numerical semigroups.
\end{abstract}

\maketitle

\section*{Introduction}
Let $a_1,\ldots,a_\nu$ be a set of positive integers such that $\gcd(a_1,\ldots,a_\nu)=1$. We say that an integer $N$ is \emph{representable} if there exist $\lambda_1,\ldots,\lambda_\nu \in \mathbb{N}$ such that $$N=\sum_{i=1}^{\nu}\lambda_i a_i.$$  The well-known \emph{Diophantine Frobenius Problem} (cf. \cite{Br}) consists of finding an explicit formula for the largest non-representable integer, denoted by $g(a_1,\ldots,a_\nu)$. This problem has been widely studied: in 1884, Sylvester (cf. \cite{S}) solved the problem for $\nu=2$, giving the formula $$g(a_1,a_2)=a_1a_2-a_1-a_2.$$
However, as $\nu$ grows the problem becomes incredibly complicated: in fact today the problem is still open for $\nu \ge 3$ (\cite{RA} gathers a lot of results on this topic). The computation of a formula for the particular case $\nu=3$ and the study of concepts and invariants related to this problem (cf. \cite{RG2}) has been the main subject of a wide number of papers, and many formulas for particular triples have been found, although the main case still remains unsolved. \\
There is a strict relation (cf. \cite{J}, \cite{RA}, \cite{RG}) between this problem and the problem of finding an explicit formula for the smallest positive integer $K$ such that $Ka_3$ is representable as $Ka_3=\lambda_1a_1+\lambda_2a_2$, where $a_1,a_2,a_3$ are pairwise coprime positive integers and $\lambda_1, \lambda_2 \in \mathbb{N}$.\\
In this short paper we approach this problem by studying it in the context of \emph{quotients of numerical semigroups} (cf. \cite{RG} for a good monograph on numerical semigroups). In Section 1 we study the set that contains the first elements of a quotient of a numerical semigroup generated by two integers, by explaining the relation between the three quotients $\frac{\langle a_i,a_j \rangle}{a_k}$ that can be made with the same triple $a_1,a_2,a_3$: we find that, once the first elements of one of those three quotients is known, it is possible to deduce the first elements of the other two. Obviously the sets that we define contain the smallest non-zero element of these quotients (that we will denote with $L_i$), hence the characterization of the elements of these sets can be useful in trying to find a characterization for $L_1,L_2,L_3$. In Section 2 we give a broad generalization of the formula for $g(a_1,a_2,a_3)$ given by Johnson in \cite{J}: in particular we drop the assumptions made in \cite{J} (therefore making the formula valid for every triple) and give a more explicit formula for $g(a_1,a_2,a_3)$ by dropping some parameters from the formula, therefore obtaining an expression dependant only on $L_1,L_2,L_3$ and reducing the problem of finding a formula for $g(a_1,a_2,a_3)$ to computing those values. In Section 3 we give another application of the result showed in Section 1 to a particular class of numerical semigroups, such that of the three quotient defined is "large". Finally in Section 4 we state some remarks and ideas for a possible continuation of this work.

\section{The first elements of the quotient of a numerical semigroup}
Given two integers $m$ and $n$ with $n > 0$ we start by defining the \emph{remainder operator} $[m]_n$ as follows   $$[m]_n=\{i \in \mathbb{N} \ \ | \ \ 0 \le i < n, \ \ i \equiv m \pmod n\}$$ (i.e. the remainder of the euclidean division of $m$ by $n$). \\ Obviously if $m$ and $n$ are positive integers such that $m < n$ then $m= [m]_n$.
Furthermore, notice that this definition is strictly related to the floor function, as we have that $$m-\left \lfloor \frac{m}{n} \right \rfloor n=[m]_n.$$
The notation we use in the brackets $[\cdot]_n$ is the one normally used in $\mathbb{Z}_n$: in particular, in the bracket we denote by $[m^{-1}]_n$ the number:
$$[m^{-1}]_n=\{i \in \mathbb{N} \ \ | \ \ 0 \le i < n, \ \ im \equiv 1 \pmod n\}$$ if it exists (i.e. if $\gcd(m,n)=1$).\\

A \emph{numerical semigroup} is a submonoid $S$ of $(\mathbb{N},+)$ such that $\mathbb{N} \setminus S$ is finite. Each numerical semigroup admits a finite set of generators $a_1,\ldots,a_\nu \in \mathbb{S}$ such that $\gcd(a_1,\ldots,a_\nu)=1$ and  $$S=\{\lambda_1a_1+\ldots+\lambda_\nu a_\nu \ | \lambda_1,\ldots,\lambda_\nu \in \mathbb{N}\}.$$ In that case, we write $S=\langle a_1,\ldots,a_\nu \rangle$. \\
First, we present a result about numerical semigroups generated by two integers:
\begin{proposition}[cf. \cite{RV}, \cite{M}]\label{one}
Let $a_1,a_2$ be relatively prime positive integers. Then $$\langle a_1,a_2 \rangle = \{ x \in \mathbb{N} \ | \ [[a_2^{-1}]_{a_1}a_2x]_{(a_1a_2)} \le x \}=\{ x \in \mathbb{N} \ | \ a_2[a_2^{-1}x]_{a_1} \le x \}.$$
\end{proposition}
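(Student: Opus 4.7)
The plan is to first prove the second equality in the displayed statement, namely
\[
\langle a_1,a_2\rangle=\{x\in\mathbb{N}\mid a_2[a_2^{-1}x]_{a_1}\le x\},
\]
and then reduce the first expression to the second by a Chinese remainder argument.

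For the main characterization, I would argue as follows. An element $x\in\mathbb{N}$ belongs to $\langle a_1,a_2\rangle$ if and only if there exist $\lambda_1,\lambda_2\in\mathbb{N}$ with $x=\lambda_1a_1+\lambda_2a_2$. Reducing modulo $a_1$ forces $\lambda_2a_2\equiv x\pmod{a_1}$, i.e.\ $\lambda_2\equiv a_2^{-1}x\pmod{a_1}$ (here $\gcd(a_1,a_2)=1$ guarantees the existence of $a_2^{-1}$ mod $a_1$). Among all non-negative integers in this congruence class the smallest is precisely $\lambda_2^\ast:=[a_2^{-1}x]_{a_1}$, and any larger admissible $\lambda_2$ lies in $\lambda_2^\ast+a_1\mathbb{N}$. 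Writing $\lambda_1=(x-\lambda_2 a_2)/a_1$, we see that $\lambda_1\in\mathbb{N}$ is equivalent to $x-\lambda_2 a_2\ge 0$, and since increasing $\lambda_2$ by $a_1$ strictly decreases $\lambda_1$, a representation exists if and only if the choice $\lambda_2=\lambda_2^\ast$ already works, i.e.\ $a_2[a_2^{-1}x]_{a_1}\le x$. This gives the second equality.

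For the first equality, set $c:=[a_2^{-1}]_{a_1}$, so that $a_2c\equiv 1\pmod{a_1}$ and obviously $a_2c\equiv 0\pmod{a_2}$. Multiplying by $x$ yields
\[
a_2cx\equiv x\pmod{a_1},\qquad a_2cx\equiv 0\pmod{a_2}.
\]
By the Chinese remainder theorem (using $\gcd(a_1,a_2)=1$) there is a unique integer $y$ with $0\le y<a_1a_2$ satisfying these two congruences, and this $y$ is exactly $[a_2cx]_{a_1a_2}$. On the other hand, $y=a_2k$ for some $0\le k<a_1$ with $a_2k\equiv x\pmod{a_1}$, which forces $k=[a_2^{-1}x]_{a_1}$. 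Therefore $[[a_2^{-1}]_{a_1}a_2x]_{a_1a_2}=a_2[a_2^{-1}x]_{a_1}$, and the two membership conditions agree termwise.

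The argument is essentially bookkeeping via the Euclidean algorithm and the Chinese remainder theorem, so there is no serious obstacle; the only point requiring a touch of care is verifying that the minimal non-negative solution $\lambda_2^\ast$ is indeed the relevant one (that replacing $\lambda_2^\ast$ by $\lambda_2^\ast+ka_1$ only worsens the inequality $\lambda_2a_2\le x$), which is what legitimates reducing the existence question to a single inequality.
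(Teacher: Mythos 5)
Your proof is correct. Note that the paper never proves Proposition \ref{one} at all---it is imported from \cite{RV} and \cite{M}---so there is no internal argument to compare with; your two steps (the minimal-coefficient argument showing $x\in\langle a_1,a_2\rangle$ iff $a_2[a_2^{-1}x]_{a_1}\le x$, which is exactly the classical Ap\'ery-set characterization of $\langle a_1,a_2\rangle$ with respect to $a_1$, and the identification $[[a_2^{-1}]_{a_1}a_2x]_{a_1a_2}=a_2[a_2^{-1}x]_{a_1}$ making the two displayed conditions coincide termwise) are the standard route and both are sound. If you want to avoid invoking the Chinese remainder theorem, observe directly that $a_2[a_2^{-1}x]_{a_1}$ lies in $[0,a_1a_2)$ and is congruent to $[a_2^{-1}]_{a_1}a_2x$ modulo $a_1a_2$, because $[a_2^{-1}x]_{a_1}-[a_2^{-1}]_{a_1}x$ is a multiple of $a_1$; this gives the same identity in one line.
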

Given a numerical semigroup $S$ and a positive integer $d$, the \emph{quotient} $\frac{S}{d}$ is the numerical semigroup $$\frac{S}{d} := \{x \in \mathbb{N} \ | \ dx \in S \}.$$
A direct consequence of Proposition \ref{one} is the following:
\begin{corollary}\label{two}
Let $a_1,a_2 \in \mathbb{Z}^+$ be two coprime positive integers. Let $a_3 \in \mathbb{Z}^+$. Then
$$\frac{\langle a_1,a_2 \rangle}{a_3} =\{ x \in \mathbb{N} \ | \ [xa_3a_2^{-1}]_{a_1}a_2 \le xa_3 \}.$$
\end{corollary}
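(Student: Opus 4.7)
The plan is to prove this directly by substituting into Proposition \ref{one}. By the definition of the quotient semigroup, $x \in \frac{\langle a_1, a_2 \rangle}{a_3}$ if and only if $xa_3 \in \langle a_1, a_2 \rangle$. So the statement is not an independent claim but rather an immediate specialization.

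First, I would recall Proposition \ref{one} in its second form: for any $y \in \mathbb{N}$, one has $y \in \langle a_1, a_2 \rangle$ iff $a_2 [a_2^{-1} y]_{a_1} \le y$. Here I use that $\gcd(a_1, a_2) = 1$, which is exactly the hypothesis guaranteeing that $[a_2^{-1}]_{a_1}$ is well-defined. Next, I would set $y := xa_3$ and substitute into this characterization, obtaining $xa_3 \in \langle a_1, a_2 \rangle \iff a_2 [a_2^{-1} x a_3]_{a_1} \le xa_3$, which is exactly the condition in the statement (up to rewriting $[x a_3 a_2^{-1}]_{a_1} a_2$, which is the same thing since $a_2$ commutes with the scalar).

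There is no real obstacle here: this corollary is essentially a translation of Proposition \ref{one}. The only minor bookkeeping point is to confirm that the bracket notation $[x a_3 a_2^{-1}]_{a_1}$ means $[(a_2^{-1} \bmod a_1) \cdot x a_3]_{a_1}$, i.e.\ one first reduces $a_2^{-1}$ modulo $a_1$ and then multiplies by $xa_3$ and reduces again. With that convention fixed, the substitution $y = xa_3$ produces the claimed formula verbatim, so the proof consists of a single line invoking Proposition \ref{one}.
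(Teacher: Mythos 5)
Your proof is correct and follows exactly the route the paper intends: the paper states Corollary \ref{two} as a direct consequence of Proposition \ref{one}, and your substitution of $y = xa_3$ into the second characterization, combined with the definition of the quotient $\frac{S}{d}$, is precisely that argument.
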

Let $a_1,a_2,a_3 \in \mathbb{Z}^+$ be pairwise coprime integers, such that $a_1 > a_2 > a_3 > 1$. Denote by $S$ the numerical semigroup $S=\langle a_1,a_2,a_3 \rangle$. Since $\gcd(a_i,a_j)=1$ for every $i \neq j$, then the sets $\langle a_i, a_j \rangle$ and the quotients $S_k=\frac{\langle a_i, a_j \rangle}{a_k}$ are numerical semigroups for every $\{i,j,k\}=\{1,2,3\}$.
The first definition is about the set of the first elements of these quotients:
\begin{definition}
With the notation expressed before, define for every $\{i,j\} \subset \{1,2,3\}$ with $i \neq j$ the sets $\phi_j(S_i)$ and $\tau_j(S_i)$ as the sets $\phi_j(S_i):=S_i \cap [0 , a_j]$ and  $\tau_j(S_i):=S_i \cap ]0 , a_j[$.
\end{definition}

Since $\{0, a_j\} \subset S_i$ then it is obvious that $\phi_j(S_i)=\tau_j(S_i) \cup \{0,a_j\}$. Furthermore, if $j \neq 1$ the two sets $\tau_j(S_i)$ and $\tau_j(S_1)$ are strictly related:
\begin{theorem}\label{main}
Let $a_1,a_2,a_3 \in \mathbb{Z}^+$ be pairwise coprime integers such that $a_1 > a_2 > a_3 > 1$. 
Then for every $\{j,k\}=\{2,3\}$ we have:
\begin{enumerate}
\item $\tau_j(S_k)=\{[\mu a_1a_k^{-1}]_{a_j} \ | \ \mu \in ]0,a_j[ \ \setminus  \ \tau_j(S_1)\}$;
\item $|\tau_j(S_k)|=a_j-1-|\tau_j(S_1)|$.
\end{enumerate}
\end{theorem}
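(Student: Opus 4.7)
The plan is to use Corollary \ref{two} to describe both $\tau_j(S_k)$ and $\tau_j(S_1)$ as subsets of $\{1,\dots,a_j-1\}$ cut out by explicit inequalities sharing the same two linear forms, and then to exhibit a bijection of $\{1,\dots,a_j-1\}$ that turns one subset into the complement of the other. Part $(2)$ is then an immediate consequence of part $(1)$.

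Since $\{j,k\}=\{2,3\}$ forces $i=1$, we have $S_k=\langle a_1,a_j\rangle/a_k$ and $S_1=\langle a_j,a_k\rangle/a_1$. Applying Corollary \ref{two} to each (in both cases choosing the generator $a_j$ as the modulus, so that both brackets are taken modulo $a_j$) I obtain
\[
\tau_j(S_k)=\{\nu\in(0,a_j) \,|\, [\nu a_k a_1^{-1}]_{a_j}\cdot a_1\le \nu a_k\},
\]
\[
\tau_j(S_1)=\{\mu\in(0,a_j) \,|\, [\mu a_1 a_k^{-1}]_{a_j}\cdot a_k\le \mu a_1\},
\]
where all inverses are taken in $\mathbb{Z}/a_j\mathbb{Z}$. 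Because $\gcd(a_1,a_j)=\gcd(a_k,a_j)=1$, the map $\varphi(\mu):=[\mu a_1 a_k^{-1}]_{a_j}$ is a bijection of $\{1,\dots,a_j-1\}$ onto itself, with inverse $\nu\mapsto[\nu a_k a_1^{-1}]_{a_j}$. Setting $\nu=\varphi(\mu)$ so that $\mu a_1\equiv \nu a_k\pmod{a_j}$ with $\mu,\nu\in(0,a_j)$, the two characterisations collapse to
\[
\nu\in\tau_j(S_k)\Longleftrightarrow \mu a_1\le \nu a_k, \qquad \mu\in\tau_j(S_1)\Longleftrightarrow \nu a_k\le \mu a_1.
\]

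The delicate point is the boundary case $\mu a_1=\nu a_k$, where both inequalities would hold at once and complementation would fail. But $\gcd(a_1,a_k)=1$ would then force $a_1\mid \nu$, which is impossible because $0<\nu<a_j\le a_2<a_1$. Hence for every $\mu\in(0,a_j)$ the two inequalities are strict and mutually exclusive, giving $\varphi(\mu)\in\tau_j(S_k)$ if and only if $\mu\notin\tau_j(S_1)$. This is exactly $(1)$, and $(2)$ follows because $\varphi$ then restricts to a bijection from $(0,a_j)\setminus\tau_j(S_1)$, of cardinality $a_j-1-|\tau_j(S_1)|$, onto $\tau_j(S_k)$.

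The main difficulty is less conceptual than bookkeeping: one must apply Corollary \ref{two} twice with the \emph{right} choice of which generator plays the role of ``$a_1$'' in its statement, so that both resulting descriptions are taken modulo the same modulus $a_j$ and so that the quantities $\mu a_1$ and $\nu a_k$ appear as symmetric counterparts on the two sides of the inequality. After this set-up, the proof reduces to the one-line comparison of the two inequalities together with the short size argument ruling out $\mu a_1=\nu a_k$.
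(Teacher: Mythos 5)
Your proof is correct and follows essentially the same route as the paper: both descriptions via Corollary \ref{two} modulo $a_j$, the bijection $\mu\mapsto[\mu a_1a_k^{-1}]_{a_j}$ of $]0,a_j[$, and the comparison of the two inequalities $\mu a_1\le\nu a_k$ and $\nu a_k\le\mu a_1$. If anything, you are slightly more careful than the paper, since you explicitly rule out the tie $\mu a_1=\nu a_k$ (via $\gcd(a_1,a_k)=1$ and $\nu<a_1$), a point the paper's chain of equivalences passes over silently.
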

\begin{proof}
 Let $x \in \tau_j(S_k)$. The fact that $j,k \neq 1$ means that $S_k=\frac{\langle a_1,a_j \rangle}{a_k}$. Now by Corollary \ref{two} we have that $$[xa_ka_1^{-1}]_{a_j}a_1 \le xa_k.$$ But $x \in \tau_j(S_k)$ means that $0< x < a_j$. By the hypothesis of pairwise coprimality of $a_1,a_2,a_3$ we obtain that $[a_1^{-1}a_k]_{a_j}$ is an invertible element modulo $a_j$. But a basic property of invertible elements in $\mathbb{Z}_{a_j}$ tells us that the function $f: \mathbb{Z} \cap [0,a_j] \rightarrow \mathbb{Z} \cap [0,a_j]$ defined by the law $f(\omega)= [\omega a_1^{-1}a_k]_{a_j}$ is a one-to-one correspondence, and that $f^{-1}(\omega)=[\omega a_k^{-1}a_1]_{a_j}$. Now we see that $$x \in \tau_j(S_k) \Leftrightarrow [xa_ka_1^{-1}]_{a_j}a_1 \le xa_k \Leftrightarrow f(x)a_1 \le xa_k=f^{-1}(f(x))a_k \Leftrightarrow$$ $$\Leftrightarrow f(x)a_1 \le [f(x)a_k^{-1}a_1]_{a_j} \Leftrightarrow f(x) \in ]0,a_j[ \ \setminus \ \tau_j(S_1).$$ 
 Then we have $$\tau_j(S_k)=\{f^{-1}(\mu) \ | \ \mu \in ]0,a_j[ \ \setminus  \ \tau_j(S_1)\}=\{[\mu a_1 a_k^{-1}]_{a_j} \ | \ \mu \in ]0,a_j[ \ \setminus  \ \tau_j(S_1)\}$$ that is the first part of our thesis. The second part is a direct consequence of the first one.
\end{proof}
This theorem will be proved useful in the next subsection.
\begin{remark}
Theorem \ref{main} tells us that $\tau_i(S_j)$ can be deduced easily from $\tau_i(S_1)$. However, it is obvious that $\tau_3(S_1) \subset \tau_2(S_1)$. Therefore, once $\tau_2(S_1)$ is known, Theorem \ref{main} allows us to obtain the two sets $\tau_2(S_3)$ and $\tau_3(S_2)$, hence obtaining the first elements of the three quotients $S_1,S_2,S_3$.
\end{remark}
\section{A formula for $g(a_1,a_2,a_3)$}
This subsection focuses on the problem of finding the \emph{Frobenius number} of a numerical semigroup generated by three integers: in particular, we improve some well-known results. Let $a_1,a_2,a_3$ be three positive integers such that $\gcd(a_1,a_2,a_3)=1$, denote by $S$ the numerical semigroup $S:=\langle a_1,a_2,a_3 \rangle$ and define the \emph{Frobenius number} $g(a_1,a_2,a_3)$ as the greatest element in $\mathbb{N} \setminus S$.
We can strenghten our hypothesis by supposing that $a_1,a_2,a_3$ are pairwise coprime. In fact Johnson (cf. \cite{J}) proved that:
\begin{theorem}[\protect{\cite[Theorem 2]{J}}]
Let $a_1,a_2,a_3 \in \mathbb{Z}^+$ and $d_{ij}=d_{ji}=\gcd(a_i,a_j)$ for every $\{i,j\} \subset \{1,2,3\}$ with $i \neq j$. Define $b_1, b_2, b_3 \in \mathbb{Z}^+$ as the three positive integers such that $a_i=b_id_{ij}d_{ik}$ for every $\{i,j,k\}=\{1,2,3\}$. Then $$g(a_1,a_2,a_3)=d_{12}d_{23}d_{31}g(b_1,b_2,b_3).$$
\end{theorem}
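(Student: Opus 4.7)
The plan is to relate representations in $\langle a_1, a_2, a_3\rangle$ to those in $\langle b_1, b_2, b_3\rangle$ through the factor $D := d_{12}d_{13}d_{23}$. Before starting, I would verify the arithmetic background: the pairwise gcds $d_{12}, d_{13}, d_{23}$ are themselves pairwise coprime (a common prime would divide $\gcd(a_1, a_2, a_3)$, which we may assume is $1$ since otherwise the Frobenius number is undefined), and each $b_i$ is coprime to $d_{jk}$ for the complementary indices; consequently $b_1, b_2, b_3$ are pairwise coprime, so $g(b_1,b_2,b_3)$ makes sense.

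My main lemma would be the representability equivalence: for a non-negative integer $N$ divisible by $D$,
\[
N \in \langle a_1, a_2, a_3\rangle \iff \tfrac{N}{D} \in \langle b_1, b_2, b_3\rangle.
\]
The substantive direction is ``$\Rightarrow$''. Given $N = \sum_i \lambda_i a_i$ with $D \mid N$, reduce modulo $d_{jk}$ for each permutation $\{i,j,k\} = \{1,2,3\}$: the $j$-th and $k$-th summands vanish because $d_{jk}$ divides both $a_j$ and $a_k$, while $D \mid N$ forces $d_{jk} \mid \lambda_i a_i$. Since $\gcd(a_i, d_{jk}) = 1$, this forces $d_{jk} \mid \lambda_i$. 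Writing $\lambda_1 = d_{23}\mu_1$, $\lambda_2 = d_{13}\mu_2$, $\lambda_3 = d_{12}\mu_3$ and using $\lambda_i a_i = \mu_i \cdot d_{jk} \cdot b_i d_{ij}d_{ik} = \mu_i b_i D$, we obtain $N = D \sum_i \mu_i b_i$, supplying the required representation of $N/D$; the reverse direction is immediate by multiplying a representation of $N/D$ through by $D$.

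Applying this to the Frobenius problem shows at once that the largest multiple of $D$ outside $\langle a_1, a_2, a_3\rangle$ is exactly $D \cdot g(b_1, b_2, b_3)$. The remaining task---and the main obstacle I would expect---is to upgrade this to the true Frobenius number, i.e., to show that every integer $N > D \cdot g(b_1, b_2, b_3)$ is representable regardless of its residue modulo $D$. I would approach this by a residue-class analysis exploiting the CRT decomposition $\mathbb{Z}/D\mathbb{Z} \cong \mathbb{Z}/d_{12} \times \mathbb{Z}/d_{13} \times \mathbb{Z}/d_{23}$: each non-zero residue can be written as a short combination $\sum c_i a_i$ of the generators (using that $a_i$ is invertible modulo $d_{jk}$), and subtracting this combination from $N$ should leave a multiple of $D$ that still exceeds $D\,g(b_1, b_2, b_3)$ and is therefore representable by the lemma. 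The delicate bookkeeping is to bound the offset $\sum c_i a_i$ uniformly, so that the subtraction never overshoots---this quantitative control, rather than the algebraic reduction, is where I expect the real work to lie.
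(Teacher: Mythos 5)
The paper gives no proof of this statement at all---it is quoted from Johnson---so the only thing to measure your proposal against is the statement itself and the classical argument behind it. Your reduction lemma is correct and cleanly argued: for $N$ divisible by $D=d_{12}d_{13}d_{23}$, reducing a representation $N=\sum\lambda_ia_i$ modulo each $d_{jk}$ forces $d_{jk}\mid\lambda_i$, so the largest non-representable multiple of $D$ is exactly $D\,g(b_1,b_2,b_3)$. But the step you defer as ``delicate bookkeeping''---that every $N>D\,g(b_1,b_2,b_3)$ in \emph{every} residue class mod $D$ is representable---is not a technicality you can postpone: it is false, and with it the statement as transcribed here. Take $(a_1,a_2,a_3)=(6,10,7)$: then $d_{12}=2$, $d_{13}=d_{23}=1$, $(b_1,b_2,b_3)=(3,5,7)$, $g(3,5,7)=4$, and the formula predicts $g(6,10,7)=2\cdot 4=8$; but $15$ is not representable by $6,10,7$ while $16,\dots,21$ are, so $g(6,10,7)=15$. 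Your intermediate conclusion is confirmed in this example ($8$ is indeed the largest \emph{even} gap of $\langle 6,10,7\rangle$), but the Frobenius number lives in the nonzero residue class mod $D$, exactly where your deferred estimate collapses.

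What is actually true (and is Johnson's Theorem 2 in its original form) is the one-gcd reduction $g(da_1',da_2',a_3)=d\,g(a_1',a_2',a_3)+(d-1)a_3$ when $\gcd(d,a_3)=1$; iterating it over $d_{12},d_{13},d_{23}$ yields the symmetric identity $g(a_1,a_2,a_3)+a_1+a_2+a_3=d_{12}d_{13}d_{23}\bigl(g(b_1,b_2,b_3)+b_1+b_2+b_3\bigr)$, i.e.\ the product law holds for the shifted quantity $g+a_1+a_2+a_3$, not for $g$ itself (in the example: $15+23=2\,(4+15)$). The offsets $\sum c_ia_i$ you hoped to bound uniformly when changing residue class cannot be made negligible; they are precisely the origin of the correction term $D(b_1+b_2+b_3)-(a_1+a_2+a_3)$. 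So the repair of your plan is either to carry out the residue-class analysis quantitatively and arrive at the shifted identity, or to prove the one-gcd reduction and apply it three times---not to force the unshifted product formula, which no argument can establish. For the purpose this theorem serves in the paper (reducing to pairwise coprime generators), the corrected identity works just as well.
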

Let us consider three pairwise coprime positive integers $a_1,a_2,a_3 \in \mathbb{Z}^+$. \\ Define the three positive integers $L_i$ as: $$L_i := \min\{x \in \mathbb{Z}^+ \ | \ L_ia_i \in \langle a_j,a_k \rangle \} \text{  for every  } \{i,j,k\}=\{1,2,3\}.$$
Notice that $a_ja_i \in \langle a_j,a_k \rangle$ for every $\{i,j,k\}=\{1,2,3\}$, thus $L_i \le a_j$.
In the same work Johnson proved the following results:
\begin{theorem}[cf.\protect{\cite{J}}]\label{frob}
Given three pairwise coprime positive integers $a_1,a_2,a_3 \in \mathbb{Z}^+$, if $L_i > 1$ for every $i=1,2,3$ then:
\begin{enumerate}
\item The six integers $x_{ij}$, defined by the equation $$L_ia_i=x_{ij}a_j+x_{ik}a_k$$ are uniquely defined.
\item The formula $$g(a_1,a_2,a_3)=L_ia_i+\max\{x_{jk}a_k, x_{kj}a_j\}-a_1-a_2-a_3$$ holds for every $\{i,j,k\}=\{1,2,3\}$.
\end{enumerate}
\end{theorem}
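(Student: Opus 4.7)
The plan is to prove both parts using the classical Apéry set characterization of the Frobenius number, namely $g(S) = \max \mathrm{Ap}(S, a_i) - a_i$ for any $i$, where $\mathrm{Ap}(S, a_i) = \{s \in S : s - a_i \notin S\}$. The key is to describe $\mathrm{Ap}(S, a_i)$ explicitly using the three defining relations $L_\ell a_\ell = x_{\ell m} a_m + x_{\ell n} a_n$ for $\{\ell, m, n\} = \{1, 2, 3\}$.

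For part (1), suppose $L_i a_i = x a_j + y a_k = x' a_j + y' a_k$ are two non-negative integer representations. Since $\gcd(a_j, a_k) = 1$, they must differ by a shift $(s a_k, -s a_j)$ for some nonzero $s \in \mathbb{Z}$; without loss of generality $s \ge 1$, so $y \ge a_j$ and $x' \ge a_k$. The hypothesis $L_i > 1$ combined with $a_1 > a_2 > a_3 > 1$ rules out the degenerate cases in which one of $x_{ij}, x_{ik}$ vanishes (such cases force $L_i = a_j$ or $L_i = a_k$, and a straightforward Frobenius bound for $\langle a_j, a_k\rangle$ excludes that), and the minimality of $L_i$ then yields a contradiction with the existence of two representations, forcing uniqueness together with the positivity of both $x_{ij}$ and $x_{ik}$.

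For part (2), I would first note that $\mathrm{Ap}(S, a_i) \subseteq \langle a_j, a_k\rangle$, so each element is of the form $u a_j + v a_k$. If $u \ge L_j$, substituting $L_j a_j = x_{ji} a_i + x_{jk} a_k$ gives $u a_j + v a_k = x_{ji} a_i + (u - L_j) a_j + (v + x_{jk}) a_k$, a representation using the positive multiple $x_{ji} a_i$ (with $x_{ji} > 0$ by part (1)), so $u a_j + v a_k \notin \mathrm{Ap}(S, a_i)$; symmetrically $v < L_k$ is required. Hence $\mathrm{Ap}(S, a_i)$ is contained in the box $B_i = \{u a_j + v a_k : 0 \le u < L_j, \ 0 \le v < L_k\}$. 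Summing the three relations and invoking the minimality of each $L_\ell$ yields the identities $x_{ij} + x_{kj} = L_j$ and $x_{ik} + x_{jk} = L_k$, which show that $(u, v) \mapsto (u + x_{ij}, v + x_{ik})$ preserves residues modulo $a_i$ (since $x_{ij} a_j + x_{ik} a_k = L_i a_i$); consequently the $x_{kj} x_{jk}$ points of $B_i$ with $u \ge x_{ij}$ and $v \ge x_{ik}$ share their residue class with a strictly smaller element of $B_i$ and hence are not in $\mathrm{Ap}(S, a_i)$. Counting gives $|\mathrm{Ap}(S, a_i)| = L_j L_k - x_{jk} x_{kj} = a_i$, as required, so $\mathrm{Ap}(S, a_i)$ is exactly the complement in $B_i$ of this upper-right rectangle.

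The maximum Apéry element is then the larger of the two boundary corner values $(x_{ij}-1) a_j + (L_k-1) a_k = L_i a_i + x_{jk} a_k - a_j - a_k$ and $(L_j-1) a_j + (x_{ik}-1) a_k = L_i a_i + x_{kj} a_j - a_j - a_k$; subtracting $a_i$ yields the desired formula $g(a_1,a_2,a_3) = L_i a_i + \max\{x_{jk} a_k, x_{kj} a_j\} - a_1 - a_2 - a_3$. The main technical obstacle is the joint verification of the identities $x_{ij} + x_{kj} = L_j$ (and the two permutations): they intertwine the minimality of all three $L_\ell$ with the uniqueness from part (1), and only once they are available does the clean description of $\mathrm{Ap}(S, a_i)$ — as $B_i$ minus a single rectangle of size $x_{kj} \cdot x_{jk}$ — fall into place, after which the max computation is a short algebraic manipulation.
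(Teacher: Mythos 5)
A preliminary remark: the paper does not prove this statement at all --- it is quoted from Johnson \cite{J} as a known result --- so there is no internal proof to compare against, and your proposal has to stand on its own as a proof of Johnson's theorem. Your architecture is the standard modern route and its final step is sound: $\mathrm{Ap}(S,a_i)\subseteq B_i$, the Ap\'ery set should be $B_i$ minus the upper-right rectangle, the maximum of that L-shaped set sits at one of the two corners, and the corner values $L_ia_i+x_{jk}a_k-a_j-a_k$ and $L_ia_i+x_{kj}a_j-a_j-a_k$ (via $L_k=x_{ik}+x_{jk}$, $L_j=x_{ij}+x_{kj}$) give the stated formula after subtracting $a_i$.

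However, everything that makes the theorem nontrivial is asserted rather than proved, and the one concrete justification you do offer is wrong. (i) Positivity of the $x_{ij}$: you claim the degenerate case $x_{ij}=0$, which forces $L_i=a_k$, is excluded by ``a straightforward Frobenius bound for $\langle a_j,a_k\rangle$''. That works only when $a_i$ is the largest generator (e.g.\ $(a_3-1)a_1>g(a_2,a_3)$ gives $L_1\le a_3-1$); for $i=2$ it would require $(a_3-1)a_2>g(a_1,a_3)$, i.e.\ $(a_3-1)(a_1-a_2)<a_3$, which fails as soon as $a_1-a_2\ge 2$ and $a_3\ge 3$. Ruling out $L_2=a_3$ genuinely needs the hypothesis $L_1>1$; compare the paper's own Lemma \ref{Lj}, which does exactly this via Theorem \ref{main}: $\tau_3(S_2)=\emptyset$ would force $\tau_3(S_1)=]0,a_3[$, i.e.\ $L_1=1$. (ii) Uniqueness: ``the minimality of $L_i$ then yields a contradiction'' is not an argument. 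For $i=2,3$ uniqueness is indeed cheap ($L_2a_2\le a_2a_3<a_1a_3$ rules out a second representation), but for $i=1$ a second representation only forces $L_1a_1\ge a_2a_3$, and $L_1\le a_3$ gives merely $L_1a_1\le a_1a_3$, which is no contradiction; since the claim is false when $L_1=1$, any correct argument must actually use the hypothesis, and yours does not. (iii) The identities $x_{ij}+x_{kj}=L_j$, $x_{ik}+x_{jk}=L_k$ and the count $L_jL_k-x_{jk}x_{kj}=a_i$: you yourself flag these as ``the main technical obstacle'' and never prove them, yet without them your counting step is circular --- from $\mathrm{Ap}(S,a_i)\subseteq B_i\setminus R_i$ and $|\mathrm{Ap}(S,a_i)|=a_i$ you can conclude equality only after knowing $|B_i\setminus R_i|=a_i$ (equivalently, that distinct points of $B_i\setminus R_i$ are incongruent modulo $a_i$), which is precisely the nontrivial content of Johnson's lemma. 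As it stands, the proposal is a correct outline of the known proof with its essential lemmas left unproved and one of them justified by an argument that fails.
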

The formula is strongly limited by the condition $L_i > 1$, that cuts away infinitely many triples. In our formula, we include these cases and try to remove the coefficients $x_{ij}$, using the fact that some of them are (as we will see) dependant on $L_1,L_2,L_3$. The only hypothesis (apart from pairwise coprimality) we make in the following is that $a_1>a_2>a_3>1$ (that is not restrictive). We have that:
\begin{proposition}\label{prop}
Let $a_1,a_2,a_3 \in \mathbb{Z}^+$ be three pairwise coprime integers such that $a_1>a_2>a_3>1$. Then for every $\{j,k\}=\{2,3\}$ we have that $$L_ja_j=[L_ja_ja_1^{-1}]_{a_k}a_1+[L_ja_ja_k^{-1}]_{a_1}a_k.$$
\end{proposition}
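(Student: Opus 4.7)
The plan is to start directly from the definition of $L_j$: since $L_ja_j \in \langle a_1,a_k\rangle$, there exist nonnegative integers $\alpha,\beta$ with
\[
L_ja_j = \alpha a_1 + \beta a_k.
\]
My goal then reduces to showing that both $\alpha<a_k$ and $\beta<a_1$ hold simultaneously, because in that case reducing the equation modulo $a_k$ (using that $\gcd(a_1,a_k)=1$) gives $\alpha \equiv L_ja_j a_1^{-1} \pmod{a_k}$, which together with $0\le \alpha<a_k$ forces $\alpha=[L_ja_ja_1^{-1}]_{a_k}$; symmetrically, reducing modulo $a_1$ forces $\beta=[L_ja_ja_k^{-1}]_{a_1}$, which is exactly the claimed identity.

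The key step is therefore an a priori size bound on $L_ja_j$. I would use the observation already recorded after the definition of the $L_i$'s: since $a_k a_j = a_j\cdot a_k \in \langle a_1,a_k\rangle$, the value $x=a_k$ satisfies the condition $xa_j\in\langle a_1,a_k\rangle$, so by minimality $L_j\le a_k$. Combined with $a_j<a_1$ (which comes from $j\in\{2,3\}$ and the standing assumption $a_1>a_2>a_3$), this yields the strict inequality
\[
L_j a_j \;\le\; a_k a_j \;<\; a_k a_1.
\]

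Now for any nonnegative representation $L_ja_j=\alpha a_1+\beta a_k$, the inequality $\alpha a_1 \le L_ja_j < a_k a_1$ forces $\alpha<a_k$, and the inequality $\beta a_k \le L_ja_j < a_k a_1$ forces $\beta<a_1$. This gives exactly the two bounds needed in the first paragraph, and substituting closes the proof.

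The only real content is the bound $L_j a_j<a_1 a_k$; everything else is a routine application of the Chinese remainder / bracket notation. The potential pitfall, which I would make sure to address explicitly, is that in principle the two conditions $\alpha<a_k$ and $\beta<a_1$ need not hold simultaneously for an arbitrary element of $\langle a_1,a_k\rangle$ (there can be several nonnegative representations), so the argument really does require the strict inequality $L_j a_j<a_1 a_k$ rather than just the weak bounds $L_j\le a_k$ and $a_j\le a_1$.
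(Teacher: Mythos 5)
Your proposal is correct and follows essentially the same route as the paper's proof: write $L_ja_j=\lambda_1 a_1+\lambda_k a_k$, use $L_j\le a_k$ and $a_j<a_1$ to get $L_ja_j\le a_ja_k<a_1a_k$, deduce $\lambda_1<a_k$ and $\lambda_k<a_1$, and then identify the coefficients by reduction modulo $a_k$ and $a_1$. Your closing remark about why the strict bound (rather than mere nonnegativity of some representation) is the real content is a fair and accurate observation, but the argument itself is the paper's.
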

\begin{proof}
Since $L_ja_j \in \langle a_1,a_k \rangle$ then we have that there exist $\lambda_1,\lambda_k \in \mathbb{N}$ such that $$L_ja_j=\lambda_1a_1+\lambda_ka_k.$$
Moreover, since $L_j \le a_k$, then $L_ja_j \le a_ja_k < a_1a_k$, and then we must have that $\lambda_1 < a_k$ and $\lambda_k < a_1$. Considering the inequality modulo $a_1$ we obtain that $$L_ja_j \equiv \lambda_ka_k \pmod{a_1} \Rightarrow \lambda_k \equiv L_ja_ja_k^{-1} \pmod{a_1}.$$ Since $0 \le \lambda_k < a_1$ we obtain by definition of $[\cdot]_{\cdot}$ that $\lambda_k=[L_ja_ja_k^{-1}]_{a_1}$. By similar considerations modulo $a_k$ we obtain that $\lambda_1=[L_ja_ja_1^{-1}]_{a_k}$, that is our thesis. 
\end{proof}
Notice that, with the notation introduced in Section 1, we have $L_i= \min(\phi_j(S_i)\setminus\{0\})=\min(\tau_j(S_i) \cup \{a_j\})$ for every $j \neq i$. After this little remark we are ready to improve the formula of Theorem \ref{frob}:
\begin{theorem}\label{newfrob}
Let $a_1,a_2,a_3 \in \mathbb{Z}^+$ be pairwise coprime integers such that $a_1 > a_2 > a_3 > 1$. Then $$g(a_1,a_2,a_3)=L_1a_1 + \max\{ [L_2a_2a_3^{-1}]_{a_1}a_3, [L_3a_3a_2^{-1}]_{a_1}a_2 \}-a_1-a_2-a_3.$$
\end{theorem}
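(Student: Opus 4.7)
The plan is to reduce the theorem to Johnson's formula (Theorem \ref{frob}) by identifying the coefficients $x_{23}$ and $x_{32}$ that appear there with the expressions furnished by Proposition \ref{prop}, and then to verify the one degenerate case left uncovered by Johnson's hypothesis.

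First I would dispense with the smallness hypotheses on $L_2$ and $L_3$. Under $a_1>a_2>a_3>1$, if $L_2=1$ then $a_2=\lambda_1 a_1+\lambda_3 a_3$ with $\lambda_1,\lambda_3\in\mathbb{N}$; the inequality $a_2<a_1$ forces $\lambda_1=0$, hence $a_3\mid a_2$, contradicting $\gcd(a_2,a_3)=1$ since $a_3>1$. The same argument gives $L_3>1$. Thus the only possibly degenerate case is $L_1=1$, which I will treat separately.

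Assume first $L_1>1$, so all three $L_i$ exceed $1$ and Theorem \ref{frob} applies. Taking $i=1$ in Johnson's formula yields
\[
g(a_1,a_2,a_3)=L_1a_1+\max\{x_{23}a_3,\,x_{32}a_2\}-a_1-a_2-a_3,
\]
where $x_{23}$ is the unique nonnegative integer appearing in $L_2a_2=x_{21}a_1+x_{23}a_3$ and $x_{32}$ the unique nonnegative integer in $L_3a_3=x_{31}a_1+x_{32}a_2$. Applying Proposition \ref{prop} with $(j,k)=(2,3)$ and then $(j,k)=(3,2)$ identifies these coefficients as
\[
x_{23}=[L_2a_2a_3^{-1}]_{a_1},\qquad x_{32}=[L_3a_3a_2^{-1}]_{a_1},
\]
and substitution gives exactly the claimed formula.

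It remains to handle the case $L_1=1$, i.e.\ $a_1\in\langle a_2,a_3\rangle$. The main (and only nonroutine) obstacle is to show that the formula still produces the correct value here. In this case $\langle a_1,a_2,a_3\rangle=\langle a_2,a_3\rangle$, so by Sylvester's formula $g(a_1,a_2,a_3)=a_2a_3-a_2-a_3$. The key sublemma I would prove is that $L_1=1$ forces $L_2=a_3$ and $L_3=a_2$: writing $a_1=\alpha a_2+\beta a_3$ with $\alpha,\beta\ge 1$ (both strictly positive by coprimality with $a_2$ and $a_3$), any expression $L_2a_2=\lambda_1a_1+\lambda_3a_3$ with $\lambda_1\ge 1$ rearranges to $(L_2-\lambda_1\alpha)a_2=(\lambda_1\beta+\lambda_3)a_3$, and the divisibility $a_3\mid(L_2-\lambda_1\alpha)$ combined with the bound $L_2\le a_3$ rules out $L_2<a_3$; the case $\lambda_1=0$ forces $a_3\mid L_2$, again giving $L_2=a_3$. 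By symmetry $L_3=a_2$. Then
\[
[L_2a_2a_3^{-1}]_{a_1}a_3=[a_2]_{a_1}a_3=a_2a_3,\qquad [L_3a_3a_2^{-1}]_{a_1}a_2=[a_3]_{a_1}a_2=a_2a_3,
\]
since $a_2,a_3<a_1$, so the right-hand side of the claimed formula equals $a_1+a_2a_3-a_1-a_2-a_3=a_2a_3-a_2-a_3$, matching Sylvester. This completes the proof.
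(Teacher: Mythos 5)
Your proposal is correct and follows essentially the same route as the paper: when $L_1>1$ you note that $L_2,L_3>1$ hold automatically, apply Johnson's formula with $i=1$, and identify $x_{23}=[L_2a_2a_3^{-1}]_{a_1}$, $x_{32}=[L_3a_3a_2^{-1}]_{a_1}$ via Proposition \ref{prop}, then check the case $L_1=1$ against Sylvester's formula. The only divergence is in the $L_1=1$ case, where you prove $L_2=a_3$ and $L_3=a_2$ by a direct divisibility argument, while the paper deduces the same fact from the counting statement of Theorem \ref{main}; both arguments are valid.
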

\begin{proof}
Denote by $R=L_1a_1 + \max\{ [L_2a_2a_3^{-1}]_{a_1}a_3, [L_3a_3a_2^{-1}]_{a_1}a_2 \}-a_1-a_2-a_3$. We prove that $R=g(a_1,a_2,a_3)$ in the two cases $L_1=1$ and $L_1 > 1$.\\
If $L_1=1$ we have that $\langle a_1,a_2,a_3 \rangle=\langle a_2,a_3 \rangle$, then $g(a_1,a_2,a_3)=g(a_2,a_3)=a_2a_3-a_2-a_3$. But since $S_1= \mathbb{N}$, then $|\tau_3(S_1)|=a_3-1$ and $|\tau_2(S_1)|=a_2-1$. By Theorem \ref{main} we obtain that $|\tau_3(S_2)|=|\tau_2(S_3)|=0$, that is $\tau_3(S_2)=\tau_2(S_3)=\emptyset$. Hence by definition of the sets $\phi_j(S_i)$ we have that $\phi_3(S_2)=\{0,a_3\}$ and $\phi_2(S_3)=\{0,a_2\}$, thus meaning $L_2=a_3$ and $L_3=a_2$. Then $$R=L_1a_1 + \max\{ [a_3a_2a_3^{-1}]_{a_1}a_3, [a_2a_3a_2^{-1}]_{a_1}a_2\}-a_1-a_2-a_3=$$ $$=a_1+\max\{[a_2]_{a_1}a_3,[a_3]_{a_1}a_2 \}-a_1-a_2-a_3=a_2a_3-a_2-a_3$$
since $[a_2]_{a_1}=a_2$ and $[a_3]_{a_1}=a_3$. Then if $L_1=1$ we have $R=g(a_1,a_2,a_3)$.\\
If $L_1 >1$ we notice that, since $a_2 < a_1$ and $(a_2,a_3)=1$, then $a_2 \not \in \langle a_1,a_3 \rangle$, i.e. $L_2 > 1$, and similarly $L_3 > 1$. Then we are under the hypotheses of Theorem \ref{frob}, therefore $$g(a_1,a_2,a_3)=L_ia_i+\max\{x_{jk}a_k, x_{kj}a_j\}-a_1-a_2-a_3$$
for every $\{i,j,k\}=\{1,2,3\}$. In particular we have 
$$g(a_1,a_2,a_3)=L_1a_1+\max\{x_{23}a_3, x_{32}a_2\}-a_1-a_2-a_3.$$
Now, since $x_{32}$ and $x_{23}$ are uniquely defined by the equations $L_2a_2=x_{21}a_1+x_{23}a_3$ and  $L_3a_3=x_{31}a_1+x_{32}a_2$ then it follows from Proposition \ref{prop} that $x_{23}=[L_2a_2a_3^{-1}]_{a_1}$ and $x_{32}=[L_3a_3a_2^{-1}]_{a_1}$ and therefore we have $R=g(a_1,a_2,a_3)$, that is our thesis.
\end{proof}
\section{Fundamental gaps}
The result obtained in Theorem \ref{main} played a marginal part in the proof of the formula for the Frobenius number in Theorem \ref{newfrob}. However, Theorem \ref{main} becomes useful once we know that $\mathbb{N}\setminus S_1$ is quite small. For this purpose we use the following known definition:
\begin{definition}
Let $S$ be a numerical semigroup. Then a gap (i.e. an element of $\mathbb{N}\setminus S$) is said to be a \emph{fundamental gap} if $\{2x,3x\} \subset S$. Denote by $FG(S)$ the set of all fundamental gaps of $S$.
\end{definition}
Notice that if $x$ is a fundamental gap of $S$, then $2x \in S$ and $3x \in S$, therefore $Kx \in S$ for every integer $K \ge 2$, i.e. $\mathbb{N} \setminus S_1 =\{1\}$. We introduce the following lemma: 
\begin{lemma}\label{Lj}
Let $a_1,a_2,a_3 \in \mathbb{Z}^+$ be three positive pairwise positive integers such that $a_1>a_2>a_3>1$. If $L_i$ is defined as in Section 2 and $L_1 = 2$ then for every $\{j,k\}=\{2,3\}$ we have
\begin{enumerate}
\item $[L_ja_ja_1^{-1}]_{a_k}=1$;
\item $[L_ja_ja_k^{-1}]_{a_1}a_k=L_ja_j-a_1$.
\end{enumerate}
   
\end{lemma}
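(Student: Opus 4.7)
The plan is to first reduce both parts of the lemma to a single claim. Proposition \ref{prop} gives the identity
\[
L_j a_j = [L_j a_j a_1^{-1}]_{a_k}\,a_1 \;+\; [L_j a_j a_k^{-1}]_{a_1}\,a_k,
\]
so part (2) follows from part (1) by substitution. Hence the real task is to show that $\lambda_1 := [L_j a_j a_1^{-1}]_{a_k}$ equals $1$.

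As a preliminary I would record that the hypothesis $L_1 = 2$ forces $a_3 \geq 3$, and hence $a_j, a_k \geq 3$ since $\{j,k\} = \{2,3\}$ and $a_2 > a_3$. Indeed, if $a_3 = 2$ then $a_1, a_2$ are both odd, so $a_1 - a_2$ is a non-negative even integer and $a_1 = a_2 + 2\cdot\tfrac{a_1 - a_2}{2} \in \langle a_2, a_3 \rangle$, forcing $L_1 = 1$ — contradicting the assumption.

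To prove $\lambda_1 \leq 1$, I would fix $\alpha_j, \alpha_k \in \mathbb{N}$ with $2a_1 = \alpha_j a_j + \alpha_k a_k$ (available from $L_1 = 2$), write $L_j a_j = \lambda_1 a_1 + \lambda_k a_k$ with $\lambda_k = [L_j a_j a_k^{-1}]_{a_1}$, and suppose for contradiction that $\lambda_1 \geq 2$. Subtracting the two relations yields
\[
(L_j - \alpha_j)\,a_j \;=\; (\lambda_1 - 2)\,a_1 \;+\; (\lambda_k + \alpha_k)\,a_k \;\in\; \langle a_1, a_k\rangle.
\]
Minimality of $L_j$ is contradicted once the strict bounds $0 < \alpha_j < L_j$ are verified: $\alpha_j = 0$ would force $a_k \mid 2$, while $\alpha_j = L_j$ would (via $\lambda_1 \geq 2$ and $L_j a_j \geq 2a_1$) force $\lambda_1 = 2$, $\lambda_k = \alpha_k = 0$, and hence $a_j \mid 2$ — both excluded by the preliminary step.

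For $\lambda_1 \geq 1$: if $\lambda_1 = 0$ then $L_j a_j = \lambda_k a_k$, so pairwise coprimality together with $L_j \leq a_k$ forces $L_j = a_k$. To refute this I would invoke Theorem \ref{main} with the roles of $j$ and $k$ exchanged, which gives
\[
\tau_k(S_j) = \{[\mu a_1 a_j^{-1}]_{a_k} : \mu \in\ ]0, a_k[\,\setminus\,\tau_k(S_1)\}.
\]
Since $L_1 = 2 > 1$ means $1 \notin S_1$, while $1 \in\ ]0, a_k[$ (as $a_k > 1$), the value $\mu = 1$ is admissible and exhibits $[a_1 a_j^{-1}]_{a_k}$ as an element of $\tau_k(S_j)$, so $L_j < a_k$, contradiction. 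The main delicacy of the argument will lie in the case analysis for the upper bound: ruling out both $\alpha_j = 0$ and $\alpha_j = L_j$ rests squarely on the preliminary observation that neither $a_j$ nor $a_k$ can equal $2$ when $L_1 = 2$, and without that reduction the two ``boundary'' degeneracies would actually occur.
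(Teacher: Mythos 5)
Your proposal is correct and follows essentially the same route as the paper: the decomposition from Proposition \ref{prop}, the representation $2a_1=\alpha_ja_j+\alpha_ka_k$ combined with a subtraction argument against the minimality of $L_j$ to bound $[L_ja_ja_1^{-1}]_{a_k}\le 1$, and Theorem \ref{main} (via $1\notin S_1$) to exclude the value $0$, with part (2) obtained by substituting back. Your version is in fact slightly tidier in isolating $a_3\ge 3$ as a preliminary and in explicitly handling the boundary case $\alpha_j=L_j$, which the paper passes over silently.
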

\begin{proof}
We are going to prove that $[L_2a_2a_1^{-1}]_{a_3}=1$, since the other case follows similarly. From Proposition \ref{prop} we know that $$L_2a_2=[L_2a_2a_1^{-1}]_{a_3}a_1+[L_2a_2a_3^{-1}]_{a_1}a_3.$$ Suppose $[L_2a_2a_1^{-1}]_{a_3}=0$. By the hypothesis of pairwise coprimality this is equivalent to $L_2=a_3$, therefore meaning that $\tau_3(S_2)= \emptyset$. But by Theorem \ref{main} we must have $\tau_3(S_1)=]0,a_3[$ and consequently $L_1=1$, that is a contradiction. Hence $[L_2a_2a_1^{-1}]_{a_3}>0$.
By definition of $L_1$ we have that there exist $\lambda_2,\lambda_3 \in \mathbb{N}$ such that $2a_1=\lambda_2a_2+\lambda_3a_3$. We must have $\lambda_2 > 0$, since if $\lambda_2=0$ then $2a_1=\lambda_3a_3$, and then, for the hypothesis of pairwise coprimality, it should follow that $a_3=2$ and $a_1,a_2$ are odd integers such that $a_1>a_2$. But this is impossible, because in that case $a_1-a_2$ is an even integer, therefore $a_1-a_2=2\mu$ with $\mu \in \mathbb{Z}^+$, and then $a_1=a_2+2\mu=a_2+\mu a_3$, thus implying $L_1=1$, contradicting our hypothesis: this proves that $\lambda_2>0$. Suppose now that $[L_2a_2a_1^{-1}]_{a_3} \ge 2$. Then we have $$L_2a_2=[L_2a_2a_1^{-1}]_{a_3}a_1+[L_2a_2a_3^{-1}]_{a_1}a_3=([L_2a_2a_1^{-1}]_{a_3}-2)a_1+[L_2a_2a_3^{-1}]_{a_1}a_3+2a_1=$$ $$=([L_2a_2a_1^{-1}]_{a_3}-2)a_1+[L_2a_2a_3^{-1}]_{a_1}a_3+\lambda_2a_2+\lambda_3a_3 \Rightarrow$$ $$\Rightarrow (L_2-\lambda_2)a_2=([L_2a_2a_1^{-1}]_{a_3}-2)a_1+([L_2a_2a_3^{-1}]_{a_1}+\lambda_3)a_3 \Rightarrow (L_2-\lambda_2)a_2 \in \langle a_1,a_3 \rangle$$ and since $\lambda_2 > 0$ this contradicts the definition of $L_2$. Therefore $[L_ja_ja_1^{-1}]_{a_k}=1$, that is the first part of our statement. The second part follows easily from Theorem \ref{main}.
\end{proof}

This Lemma and Theorem \ref{main} are the main tools of the following:
\begin{corollary}
Let $a_1,a_2,a_3 \in \mathbb{Z}^+$ be three positive pairwise coprime integers such that $a_1>a_2>a_3>1$. If $a_1 \in FG(\langle a_2,a_3 \rangle)$ then
$$g(a_1,a_2,a_3)=\max\{[a_1a_2^{-1}]_{a_3}a_2,[a_1a_3^{-1}]_{a_2}a_3\}-a_2-a_3.$$
\end{corollary}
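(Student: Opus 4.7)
\emph{Plan of proof.} The hypothesis that $a_1$ is a fundamental gap of $\langle a_2,a_3\rangle$ translates immediately into $L_1=2$: by definition $2a_1,3a_1\in\langle a_2,a_3\rangle$ while $a_1\notin\langle a_2,a_3\rangle$, so the smallest positive $K$ with $Ka_1\in\langle a_2,a_3\rangle$ is $2$. The strategy is therefore to specialize Theorem \ref{newfrob} to the case $L_1=2$ and use Lemma \ref{Lj} to eliminate all remaining bracketed expressions and the unknowns $L_2,L_3$.

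Substituting $L_1=2$ into the formula of Theorem \ref{newfrob} yields
$$g(a_1,a_2,a_3)=2a_1+\max\bigl\{[L_2a_2a_3^{-1}]_{a_1}\,a_3,\ [L_3a_3a_2^{-1}]_{a_1}\,a_2\bigr\}-a_1-a_2-a_3.$$
Applying Lemma \ref{Lj}(2) to each of the two inner terms (which is legal precisely because $L_1=2$) replaces them with $L_2a_2-a_1$ and $L_3a_3-a_1$ respectively, and the $a_1$'s collapse to give the intermediate expression $g(a_1,a_2,a_3)=\max\{L_2a_2,\,L_3a_3\}-a_2-a_3$.

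It remains to identify $L_2$ and $L_3$ explicitly. Lemma \ref{Lj}(1) reads $[L_2a_2a_1^{-1}]_{a_3}=1$, i.e.\ $L_2a_2\equiv a_1\pmod{a_3}$, so that $L_2\equiv a_1a_2^{-1}\pmod{a_3}$. Combined with the universal bound $L_2\le a_3$ (since $a_3a_2\in\langle a_1,a_3\rangle$) and the fact that $L_2=a_3$ would force $a_1\equiv 0\pmod{a_3}$, contradicting $\gcd(a_1,a_3)=1$ together with $a_3>1$, this pins down $L_2=[a_1a_2^{-1}]_{a_3}$; the symmetric argument gives $L_3=[a_1a_3^{-1}]_{a_2}$, and substituting into the intermediate expression produces the stated formula. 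I do not anticipate any serious technical obstacle: Theorem \ref{newfrob} and Lemma \ref{Lj} do essentially all the work, and the argument is a short chain of substitutions; the only subtle point is the strict inequality $L_j<a_k$ that allows $L_j$ to be written as a canonical residue rather than merely congruent to one.
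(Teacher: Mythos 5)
Your proof is correct, and its first half (deduce $L_1=2$ from the fundamental-gap hypothesis, substitute into Theorem \ref{newfrob}, use Lemma \ref{Lj}(2) to reduce to $\max\{L_2a_2,L_3a_3\}-a_2-a_3$) coincides exactly with the paper's. Where you diverge is in identifying $L_2$ and $L_3$: the paper uses the full strength of $a_1\in FG(\langle a_2,a_3\rangle)$ to get $S_1=\mathbb{N}\setminus\{1\}$, hence $]0,a_j[\,\setminus\tau_j(S_1)=\{1\}$ for $j=2,3$, and then invokes Theorem \ref{main} to exhibit $\tau_3(S_2)=\{[a_1a_2^{-1}]_{a_3}\}$ and $\tau_2(S_3)=\{[a_1a_3^{-1}]_{a_2}\}$ as singletons, reading off $L_2,L_3$ as their minima; you instead read the congruence $L_ja_j\equiv a_1$ off Lemma \ref{Lj}(1), combine it with the bound $L_2\le a_3$, $L_3\le a_2$ and coprimality to exclude the endpoint, and pin down $L_j$ as the canonical residue. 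Both arguments are sound, and your handling of the subtle point (ruling out $L_2=a_3$, $L_3=a_2$ so that $L_j$ really equals the bracket) is done correctly. Your route is a bit leaner and in fact proves slightly more: it uses only $L_1=2$, i.e.\ $2a_1\in\langle a_2,a_3\rangle$ and $a_1\notin\langle a_2,a_3\rangle$, never the condition $3a_1\in\langle a_2,a_3\rangle$, so the formula holds under this strictly weaker hypothesis (for instance $(a_1,a_2,a_3)=(9,8,5)$ has $L_1=2$ but $27\notin\langle 8,5\rangle$, and the formula still gives $g=12$). What the paper's argument buys in exchange is an explicit description of the whole sets $\tau_3(S_2)$ and $\tau_2(S_3)$ via Theorem \ref{main}, which is the point the section is meant to illustrate, whereas your derivation keeps Theorem \ref{main} only implicitly, through its use inside the proof of Lemma \ref{Lj}.
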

\begin{proof}
Under these hypotheses we have $L_1=2$, and from Theorem \ref{newfrob} we know that $$g(a_1,a_2,a_3)=2a_1 + \max\{ [L_2a_2a_3^{-1}]_{a_1}a_3, [L_3a_3a_2^{-1}]_{a_1}a_2 \}-a_1-a_2-a_3.$$
From the second part of Lemma \ref{Lj} it follows directly that $[L_2a_2a_3^{-1}]_{a_1}a_3=L_2a_2-a_1$ and $[L_3a_3a_2^{-1}]_{a_1}a_2=L_3a_3-a_1$, therefore our formula becomes $$g(a_1,a_2,a_3)=2a_1 + \max\{ L_2a_2-a_1, L_3a_3-a_1\}-a_1-a_2-a_3=\max\{L_2a_2,L_3a_3\}-a_2-a_3.$$ Using the notation given in Section 1, we obtain that if $a_1 \in FG(\langle a_2,a_3 \rangle)$ then $S_1=\mathbb{N} \setminus \{1\}$ and $\tau_3(S_1)=]0,a_3[ \setminus \{1\}$, leading to $]0,a_3[ \setminus \tau_3(S_1)=\{1\}$, and similarly $]0,a_2[ \setminus \tau_2(S_1)=\{1\}$.
Therefore by Theorem \ref{main} we obtain that $S_3 \cap ]0,a_2[=\tau_2(S_3)=\{[a_1a_3^{-1}]_{a_2}\}$ and $S_2 \cap ]0,a_3[=\tau_3(S_2)=\{[a_1a_2^{-1}]_{a_3}\}$, i.e. $L_3=[a_1a_3^{-1}]_{a_2}$ and $L_2=[a_1a_2^{-1}]_{a_3}$. Then we have
$$g(a_1,a_2,a_3)=\max\{[a_1a_2^{-1}]_{a_3}a_2,[a_1a_3^{-1}]_{a_2}a_3\}-a_2-a_3$$ and the proof is complete.
\end{proof}
\section{Conclusions and Questions}
The main innovation of this paper is the use of the operator $[\cdot]_{\cdot}$  in a formula for $g(a_1,a_2,a_3)$. This is meant to be an expansion of our field of action: since Curtis proved in \cite{C} the nonexistence of a closed formula that can be reduced to a set of finite polynomials, it makes sense to introduce a non-algebraic operator, such as the one we gave. Notice that our operator is strictly related to the well-known floor function by the equation $m-\left \lfloor \frac{m}{n} \right \rfloor n=[m]_n$. With this addition we are able to obtain a formula dependant only on the values $L_i$, and since the value $L_i$ can be computed by various algorithms (cf. \cite{M}, \cite{RV}) we actually obtain a procedure for computing $g(a_1,a_2,a_3)$. It is worth noting that these algorithms only use algebraic operators and the floor function, or the strictly related operator $[\cdot]_{\cdot}$. Then it makes sense to raise the following question:
\begin{question}
Is it possible to find a formula for $g(a_1,a_2,a_3)$ (or for the $L_i$) as a function of $a_1,a_2,a_3$ that is based on the algebraic operations and the floor function (or equivalently the operator $[\cdot]_{\cdot}$)?
\end{question}


\begin{thebibliography}{99}
\bibitem{Br} A. Brauer, \emph{On a problem of partitions}, Am. J. Math. 64 (1942), 299-312.

\bibitem{C} F. Curtis, \emph{On formulas for the Frobenius number of a numerical semigroup}, Math. Scand. 67 (1990), 190-192.
\bibitem{J} S. M. Johnson, \emph{A linear Diophantine Problem}, Can. J. Math., 12 (1960), 390-398.

\bibitem{M} A. Moscariello, \emph{On the smallest solution of a proportionally modular Diophantine Inequality}, Preprint.

\bibitem{RA} J. L. Ram\'irez Alfons\'in, \emph{The Diophantine Frobenius Problem}, Oxford University Press, Oxford (2005).

\bibitem{RG} J. C. Rosales, P. A. Garc\'ia Sanchez, \emph{Numerical Semigroups}, Springer (2009).

\bibitem{RG2} J. C. Rosales, P. A. Garc\'ia Sanchez, \emph{Numerical semigroups with embedding dimension three}, Archiv Math (Basel) 83 (2004), 488-496.

\bibitem{RV} J. C. Rosales, P.Vasco, \emph{The smallest integer that is solution of a proportionally modular Diophantine equation}, Mathematical Inequalities and Applications, Vol.11, Number 2 (2008), 203-212.

\bibitem{S} J. J. Sylvester, \emph{Mathematical questions with their solutions}, Educational Times, 41 (1884).
\end{thebibliography}
\end{document}